\newtheorem{thm}{{Theorem}}
\newtheorem{lemma}[thm]{{Lemma}}
\newtheorem{prop}[thm]{Proposition}
\theoremstyle{remark}
\newtheorem*{remark}{Remark}
\newcommand{\Z}{\mathbb{Z}}
\newcommand{\F}{\mathbb{F}}
\newcommand{\C}{\mathbb{C}}
\newcommand{\Q}{\mathbb{Q}}
\newcommand{\ra}{\rightarrow}
\newcommand{\g}{\gamma}
\newcommand{\diag}{\mathrm{diag}}
\newcommand{\Tr}{\mathrm{Tr}}
\newcommand{\NP}{\mathrm{NP}}
\newcommand{\HP}{\mathrm{HP}}
\newcommand{\gap}{\mathrm{gap}}
\newcommand{\detphi}{\det\nolimits^{\varphi^{-1}}}
\newcommand{\udots}{\mathinner{\mskip1mu\raise1pt\vbox{\kern7pt\hbox{.}}
\mskip2mu\raise4pt\hbox{.}\mskip2mu\raise7pt\hbox{.}\mskip1mu}}
\begin{document}
\title[Newton polygons]{Newton polygons of $L$-functions of polynomials $x^d+ax^{d-1}$ with $p\equiv-1\bmod d$}
\author{Yi Ouyang, Shenxing Zhang}
\address{Wu Wen-Tsun Key Laboratory of Mathematics, School of Mathematical Sciences, University of Science and Technology of China, Hefei, Anhui 230026, PR China}
\email{yiouyang@ustc.edu.cn, zsxqq@mail.ustc.edu.cn}
\subjclass[2010]{Primary 11; Secondary 14}
\thanks{Corresponding author: S. Zhang. Email: zsxqq@mail.ustc.edu.cn}

\begin{abstract}
For prime $p\equiv-1\bmod d$ and $q$ a power of $p$, we obtain the slopes of the $q$-adic Newton polygons of $L$-functions of $x^d+ax^{d-1}\in \F_q[x]$ with respect to finite characters $\chi$ when $p$ is larger than an explicit bound depending only on $d$ and $\log_p q$. The main tools are Dwork's trace formula and Zhu's rigid transform theorem.
\end{abstract}

\maketitle

\section{Main results}
Let $q=p^h$ be a power of the rational prime number $p$. Let $v$ be the normalized valuation on $\overline{\Q}_p$ with $v(p)=1$. For a polynomial $f(x)\in\F_q[x]$, let $\hat f\in\Z_q[x]$ be its Teichm\"uller lifting. For a finite character $\chi:\Z_p\ra\C_p^\times$ of order $p^{m_\chi}$, define the
$L$-function
  \begin{equation} L^*(f,\chi,t)=\exp\left(\sum_{m=1}^\infty S_m^*(f,\chi)\frac{t^m}{m}\right),   \end{equation}
where $S^*_m(f,\chi)$ is the exponential sum
  \begin{equation} S^*_m(f,\chi)=\sum_{x\in\mu_{q^m-1}}\chi(\Tr_{\Q_{q^m}/\Q_p}\hat f(x)) \end{equation}
and $\mu_n$ is the group of $n$-th roots of unity. Then $L^*(f,\chi,t)$ is a polynomial of degree $p^{m_\chi-1}d$ by Adolphson-Sperber \cite{AS} and Liu-Wei \cite{LWe}. We denote $\NP_q(f,\chi,t)$ the $q$-adic Newton polygon of $L^*(f,\chi,t)$.

We fix a  character $\Psi_1:\Z_p\ra\C_p^\times$ of order $p$, and denote $L^*(f,t)=L^*(f,\Psi_1,t)$ and $\NP_q(f,t)=\NP_q(f,\Psi_1, t)$. When $p\equiv  1\bmod d$,  it is well-known that $\NP_q(f,t)$ coincides the Hodge polygon with slopes $\{i/d:0\le i\le d-1\}$.

Let $a$ be a nonzero element in $\F_q$. For $f(x)=x^d+ax^s (s<d)$, Liu-Niu and Zhu obtained the slopes of $\NP_q(f,t)$ for $p$ large enough under certain conditions in \cite[Theorem~1.10]{LN2} and \cite{Z2}, but these conditions are not so easy to check. For $f(x)=x^d+ax$, Zhu, Liu-Niu and Ouyang-J.\ Yang obtained the slopes in \cite[Theorem~1.1]{Z2}, \cite[Theorem~1.10]{LN} and \cite[Theorem~1.1]{OY}, see also R.\ Yang \cite[\S 1 Theorem]{Y} for earlier results.

In \cite{DWX}, Davis--Wan--Xiao gave a result on the behavior of the slopes of $\NP_q(f,\chi,t)$ when the order of $\chi$ is large enough. In this way for $p$ sufficiently large, they can obtain the slopes of $\NP_q(f,\chi, t)$ based on the slopes of $\NP_q(f,\chi_0, t)$ with $\chi_0$ a character of order $p^2$. In \cite{N}, Niu gave a lower bound of the Newton polygon $\NP_q(f,\chi, t)$. In \cite[Theorem~4.3]{OY}, Ouyang--Yang showed that if the Newton polygon of $L^*(f,t)$ is sufficiently close to its Hodge polygon, the slopes of $\NP_q(f,\chi,t)$ for $\chi$ in general follow from the slopes of $\NP_q(f,t)$. As a consequence they obtained the slopes of $\NP_q(x^d+ax,\chi,t)$ when $p$ is bigger than an explicit bound depending only on $d$ and $h$.

Our main results are the following two theorems.
\begin{thm}\label{slope}
Let $f(x)=x^d+ax^{d-1}$ be a polynomial in $\F_q[x]$ with $a\neq 0$. Let $N(d)=\frac{d^2+3}{4}$ for $q=p$ and $\frac{d^2}{2}$ for general $q$. If $p\equiv-1\bmod d$ and $p> N(d)$, the $q$-adic Newton polygon of $L^*(f,t)$ has slopes
  \[\{w_0,w_1,\ldots,w_{d-1}\},\]
where
  \[ w_i=\begin{cases}
      \frac{(p+1)i}{d(p-1)},&\ \text{if}\ i<\frac{d}{2};\\
      \frac{(p+1)i-d}{d(p-1)}=\frac{1}{2},&\ \text{if}\ i=\frac{d}{2};\\
      \frac{(p+1)i-2d}{d(p-1)},&\ \text{if}\ i>\frac{d}{2}.
  \end{cases} \]
\end{thm}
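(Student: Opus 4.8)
The key tool is Dwork's trace formula, which expresses the $L$-function $L^*(f,t)$ (up to known factors) as a characteristic-power-series expansion of a Frobenius operator $\alpha = \psi\circ E_f$ acting on a $p$-adic Banach space of overconvergent functions, where $E_f$ is the splitting function associated to $\hat f = \hat x^d + \hat a\hat x^{d-1}$. Concretely, the Newton polygon $\NP_q(f,t)$ is controlled by the valuations of the entries of the infinite Frobenius matrix in a suitable monomial basis. So the plan is: (1) write down the Frobenius matrix $M = (m_{ij})$ explicitly enough to read off the $v$-adic valuations of its entries, using the fact that the coefficients of the Artin–Hasse-type exponential $E_f$ satisfy the standard estimate $v(\text{coeff of }x^n) \ge n/(d\cdot\tfrac{p-1}{p})$ — more precisely the $\gamma$-weight estimates coming from the Newton/Hodge polygon of $f$; (2) identify the ``diagonal'' contribution that forces the lower bound $\NP_q(f,t) \ge \HP$, where the Hodge polygon here has the stated slopes $w_i$ (this is essentially Adolphson–Sperber / Wan, combined with the elementary computation that when $p\equiv -1\bmod d$ the $\gamma$-weight function on residues mod $d$ produces exactly the broken pattern $(p+1)i, (p+1)i - d, (p+1)i - 2d$ across the three ranges $i<d/2$, $i=d/2$, $i>d/2$); (3) show the lower bound is attained, i.e.\ the relevant minor of $M$ has valuation exactly the Hodge number, by invoking Zhu's rigid transform theorem to reduce the convergence/exactness question to a finite, explicitly computable determinant.

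In more detail: after Step (1), one has $M_{ij} = \pi^{?}\cdot(\text{polynomial in }\hat a)$ with $\pi$ a fixed uniformizer of valuation $1/(p-1)$ (the Dwork period). The off-diagonal decay is governed by the cost of moving between monomials $x^i$ and $x^j$ via $f$, and because $f = x^d + ax^{d-1}$ has only two terms the combinatorics of which products of $E_f$-coefficients can contribute is quite constrained — this is what makes the explicit bound $N(d)$ possible. Zhu's rigid transform theorem (the ``rigid transform'' reorganizing $M$ into a form where a single ``leading'' coefficient matrix is visible) lets us separate $M = M_0(1 + \text{higher order in }\pi)$, so that $\NP_q(f,t) = \NP(M_0)$ provided $p$ is large enough to swamp the error terms; the hypothesis $p > N(d)$ is exactly the threshold at which the Hasse-polynomial-type leading term $M_0$ does not vanish and dominates. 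One then computes $\NP(M_0)$ directly: it is a sum of $d$ diagonal $v$-values, which unwinds to the piecewise formula for $w_i$. The separate thresholds $\frac{d^2+3}{4}$ for $q=p$ versus $\frac{d^2}{2}$ for general $q$ reflect that for $q=p$ the relevant leading matrix is smaller (only $h=1$ Frobenius twist), so a sharper bound suffices; tracking this is a bookkeeping refinement of the general-$q$ argument.

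The main obstacle is Step (3): proving that the lower bound from the Hodge polygon is actually \emph{achieved}, i.e.\ that the relevant sub-determinant of the Frobenius matrix is nonzero modulo the next power of $\pi$. Generic non-vanishing is not enough — we need it for \emph{every} nonzero $a$ and the specific family $x^d+ax^{d-1}$ — so one must produce an explicit formula (or a nonvanishing congruence) for the leading coefficient matrix $M_0$ and show its principal minors are units. This is where the condition $p\equiv -1\bmod d$ is essential: it makes the ``twist'' $x\mapsto x^p$ act on residues mod $d$ by $i\mapsto -i$, which pairs up the monomials $x^i$ and $x^{d-i}$ and turns $M_0$ into a block-structured matrix (roughly $2\times 2$ blocks indexed by $\{i, d-i\}$ plus possibly a $1\times1$ block at $i=d/2$) whose determinant factors into small, controllable pieces — pieces one can evaluate using properties of binomial coefficients and Gauss-sum-type identities. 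Verifying these pieces are nonzero, and that the error terms in Zhu's transform really are of strictly higher $\pi$-order once $p>N(d)$, is the technical heart; everything else is Dwork-theory bookkeeping and the combinatorial identity computing the $w_i$.
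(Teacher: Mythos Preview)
Your overall architecture---Dwork trace formula, estimate the principal minors of the Frobenius matrix $A_1$, then Zhu's theorem to pass from $q=p$ to general $q$---matches the paper. But there is a conceptual slip and a genuine gap in Step~(3).

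The slip: the $w_i$ are \emph{not} Hodge slopes. The Hodge polygon of $f$ has slopes $i/d$, and the Newton polygon here lies strictly above it in the interior (they agree only at the endpoints, since $\sum_i w_i=\sum_i i/d$). So you are not proving ``$\NP=\HP$''; you are computing $\NP$ directly by finding the exact valuation of each principal minor $\det A_1[s+1]$ and checking it beats all other $s+1$-minors. Relatedly, Zhu's theorem in the paper is not a decomposition $M=M_0(1+\cdots)$; it is the statement that under a gap hypothesis the $p$-adic minor valuations for $A_1$ transfer to $q$-adic ones for $A_h=A_1\varphi(A_1)\cdots\varphi^{h-1}(A_1)$. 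The two thresholds arise because Zhu's hypothesis requires $v(\det A_1[s+1])-\sum_{i\le s}i/d\le\frac{1}{2d}$, whereas the direct $q=p$ argument only needs $<\frac{1}{d}$.

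The gap: your mechanism for non-vanishing will not work as stated. Since $p\equiv-1\bmod d$, the map $i\mapsto pi\bmod d$ is $i\mapsto d-i$, so the lowest-valuation entries of $A_1$ sit on the \emph{anti-diagonal} $j=d-i$, not in $2\times2$ scalar blocks indexed by $\{i,d-i\}$. For $s\le(d-1)/2$ the principal minor $A_1[s+1]$ misses this anti-diagonal entirely, and its leading coefficient (after stripping powers of $\gamma$) is a full $s\times s$ matrix
\[
M(s)=\Bigl(\frac{a^{i+j}}{(ki-i-j)!\,(i+j)!}\Bigr)_{1\le i,j\le s},\qquad p=dk-1,
\]
whose determinant does not factor into small pieces or Gauss-sum expressions. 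The paper proves $v(\det M(s))=0$ by a Vandermonde-type argument: after row/column scaling, expanding the falling factorials $(ki-i-1)[j-1]$ in the basis $\{(i+j)[t]\}_t$ factors $M(s)$ as (Vandermonde)$\times$(unit upper-triangular)$\times$(unit lower-triangular), each factor a $p$-adic unit. For $s\ge d/2$ one then block-decomposes $B[s]=\begin{pmatrix}B[d-1-s]&P_1\\P_2&Q\end{pmatrix}$ with $Q$ anti-diagonal-dominant (hence a unit), and a Schur complement reduces back to $\det M(d-1-s)$. This Vandermonde computation is the technical heart you are missing; without it the attainment in Step~(3) is unproved.
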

\begin{remark}
(1) For general $p$, write $pi=dk_i+r_i$ with $1\le i,r_i\le d-1$. If $r_i>s$ for any $1\le i\le s$, then one can decide that the first $s+1$ slopes of $\NP_q(f,t)$ are  $\{0,\frac{k_1+1}{p-1},\ldots,\frac{k_s+1}{p-1}\}$ by our method for sufficiently large $p$. For the rest of slopes, one needs to calculate the determinants of submatrices of  ``Vandermonde style'' matrices.

(2) The slopes in our case coincide Zhu's result in \cite{Z2}.
\end{remark}

\begin{thm}\label{chislope}
Assume $f(x)$ and $N(d)$ as above. For any non-trivial finite character $\chi$, if $p\equiv-1\bmod d$ and $p>\max\{N(d),\frac{h(d^2-1)}{4d}+1\}$, the $q$-adic Newton polygon of $L^*(f,\chi,t)$ has slopes
  \[\{p^{1-m_\chi}(i+w_j):0\le i\le p^{m_\chi-1}-1,0\le j\le d-1\}.\]
\end{thm}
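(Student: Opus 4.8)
The plan is to reduce Theorem~\ref{chislope} to Theorem~\ref{slope} by invoking the comparison theorem of Ouyang--Yang \cite[Theorem~4.3]{OY}. Roughly, that result says: once the $q$-adic Newton polygon $\NP_q(f,t)$ of the order-$p$ twist lies within vertical distance less than $1/h$ of the Hodge polygon $\HP$ with slopes $\{i/d:0\le i\le d-1\}$, the slopes of $\NP_q(f,\chi,t)$ for an arbitrary non-trivial finite character $\chi$ are obtained from the slopes $w_0\le\cdots\le w_{d-1}$ of $\NP_q(f,t)$ by the ``spreading'' rule $w_j\mapsto\{p^{1-m_\chi}(i+w_j):0\le i\le p^{m_\chi-1}-1\}$; the case $m_\chi=1$ is just Theorem~\ref{slope}. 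So I must (i) read off the $w_j$, and (ii) check the closeness hypothesis under $p>\frac{h(d^2-1)}{4d}+1$.

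For (i): since $p>N(d)$, Theorem~\ref{slope} applies and identifies $w_0,\dots,w_{d-1}$ explicitly. A short check with those formulas shows $0=w_0\le w_1\le\cdots\le w_{d-1}<1$, where the strict increase follows from $p+1>d$ (guaranteed by $p>N(d)$) and $w_{d-1}<1$ reduces to $p+1>0$. Hence, for each $i$, the $d$ numbers $i+w_j$ all lie in $[i,i+1)$, so the translates over $0\le i\le p^{m_\chi-1}-1$ are pairwise disjoint and the multiset $\{p^{1-m_\chi}(i+w_j)\}$ is genuinely the sorted slope list of a polynomial of degree $p^{m_\chi-1}d$, matching the degree of $L^*(f,\chi,t)$ recorded in Section~1.

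For (ii): set $\delta_i=w_i-\tfrac id$. The formulas of Theorem~\ref{slope} give $\delta_i=\frac{2i}{d(p-1)}$ for $i<\frac d2$, $\delta_i=-\frac{2(d-i)}{d(p-1)}$ for $i>\frac d2$, and $\delta_{d/2}=0$ when $d$ is even; in particular $\delta_{d-i}=-\delta_i$, whence $\sum_{i=0}^{d-1}\delta_i=0$, so $\NP_q(f,t)$ meets $\HP$ at both endpoints. The vertical gap at abscissa $k$ is $\bigl|\sum_{i=0}^{k-1}\delta_i\bigr|$, which for $k\le\lceil d/2\rceil$ equals $\frac{k(k-1)}{d(p-1)}$ and, by the symmetry $\delta_{d-i}=-\delta_i$, has the mirrored value for larger $k$; it is therefore maximized near $k=d/2$, the maximum being $\frac{d^2-1}{4d(p-1)}$ for odd $d$ and $\frac{d-2}{4(p-1)}$ for even $d$, and in both cases at most $\frac{d^2-1}{4d(p-1)}$ (since $d^2-2d\le d^2-1$). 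Now $p>\frac{h(d^2-1)}{4d}+1$ is exactly $\frac{d^2-1}{4d(p-1)}<\frac1h$, so the distance between $\NP_q(f,t)$ and $\HP$ is $<1/h$, and \cite[Theorem~4.3]{OY} yields the claimed slopes.

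The reduction itself is light; the step needing care is matching the hypotheses of \cite[Theorem~4.3]{OY} exactly: one must confirm that the ``closeness'' there is measured against the Hodge polygon with slopes $i/d$ and that the threshold $1/h$ is uniform in $m_\chi$, so that the single inequality $p>\frac{h(d^2-1)}{4d}+1$ suffices simultaneously for all $\chi$. Should \cite[Theorem~4.3]{OY} be stated only for $m_\chi\ge2$, the residual case $m_\chi=1$ still follows: for any order-$p$ character $\chi$ the sum $S^*_m(f,\chi)$ differs from $S^*_m(f,\Psi_1)$ by an automorphism of $\Q_p(\mu_p)$, so $L^*(f,\chi,t)$ and $L^*(f,t)$ have the same Newton polygon, namely the one given by Theorem~\ref{slope}.
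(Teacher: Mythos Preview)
Your proof is correct and follows essentially the same route as the paper: both invoke \cite[Theorem~4.3]{OY} after bounding the Newton--Hodge gap by $\frac{d^2-1}{4d(p-1)}$ via the explicit $w_j$ from Theorem~\ref{slope}. The only cosmetic difference is that the paper states the gap condition for the infinite polygon of $\det(I-tA_h)$ and uses periodicity (your observation $\sum_i\delta_i=0$) to reduce to one period, which is exactly the finite computation you carry out.
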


\section{Preliminaries}
\subsection{Dwork's trace formula}
We will recall Dwork's work for $f(x) =x^d+ax^{d-1}$. For general $f$, one can see \cite[\S 2]{OY}.

Let $\g\in \Q_p(\mu_p)$ be a root of the Artin-Hasse exponential series
  \[E(t)=\exp(\sum_{m=0}^\infty p^{-m}t^{p^m})\]
such that $v(\g)=\frac{1}{p-1}$. Fix a $\g^{1/d}\in\bar\Q_p$. Let
  \[ \theta(t)=E(\g t)=\sum_{m=0}^\infty \g_m t^m \]
be Dwork's splitting function. Then $v(\g_m)\ge m/(p-1)$, and $\g_m=\g^m/m!$ for $0\le m\le p-1$. Let
  \[ F(x)=\theta(x^d)\theta(ax^{d-1})=\sum_{i=0}^\infty F_i x^i, \]
then
  \[ F_i=\sum_{dm+(d-1)n=i} \g_m\g_n a^n.\]
One can see $m+n\ge i/d$ and $v(F_i)\ge \frac{i}{d(p-1)}$.

Set $A_1=(F_{pi-j}\g^{(j-i)/d})_{i,j\ge 0}$. This is a nuclear matrix over $\Q_q(\g^{1/d})$ with
  \[ v(F_{pi-j}\g^{(j-i)/d})\ge \frac{pi-j}{d(p-1)}+\frac{j-i}{d(p-1)}=\frac{i}{d}. \]
We extend the Frobenius $\varphi$ to $\Q_q(\g^{1/d})$ with $\varphi(\g^{1/d})=\g^{1/d}$.

\begin{thm}[Dwork]\label{Dwork}
Let $A_h=A_1\varphi(A_1)\cdots\varphi^{h-1}(A_1)$. Then
  \[ L^*(f,t)=\frac{\det^{\varphi^{-1}}(I-tA_h)}{\detphi(I-tqA_h)}. \]
\end{thm}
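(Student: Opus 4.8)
The plan is to run the standard Dwork-theoretic argument, specialised to $f(x)=x^d+ax^{d-1}$; the general case is \cite[\S2]{OY}, so the real task is to check that this particular $f$ fits that framework. First I would record the two basic facts about the splitting function $\theta$: since $v(\g)=\frac1{p-1}$, the element $\theta(1)=E(\g)$ is a primitive $p$-th root of unity, so normalising $\Psi_1$ by $\Psi_1(1)=E(\g)$ one has Dwork's splitting identity $\Psi_1(\Tr_{\Q_{q^m}/\Q_p}y)=\prod_{i=0}^{hm-1}\theta(y^{p^i})$ for Teichm\"uller $y\in\mu_{q^m-1}$; and $v(\g_m)\ge m/(p-1)$, which gives the displayed bound $v(F_i)\ge\frac{i}{d(p-1)}$. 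Because $x$ and $a$ are Teichm\"uller, so are the monomials $x^d$ and $ax^{d-1}$ and hence $\hat f(x)$; applying the splitting identity summand by summand to $\Tr\hat f(x)$ and using additivity of $\Psi_1$ rewrites the exponential sum as $S^*_m(f)=\sum_{x\in\mu_{q^m-1}}\prod_{i=0}^{hm-1}\varphi^i(F)(x^{p^i})$, where $\varphi^i(F)$ is $F$ with its coefficients Frobenius-twisted; since $\varphi$ fixes $\g^{1/d}$ and $\g$, this only replaces $a$ by $a^{p^i}$, and is $h$-periodic in $i$.

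Next I would introduce the Dwork operator. On the $p$-adic Banach module of overconvergent series $\sum_{i\ge 0}c_i\g^{i/d}x^i$ over $\Q_q(\g^{1/d})$, the operator $\psi_p\circ m_F$ — with $\psi_p(\sum c_ix^i)=\sum c_{pi}x^i$ and $m_F$ multiplication by $F$ — has matrix $A_1=(F_{pi-j}\g^{(j-i)/d})$ in the basis $\{\g^{i/d}x^i\}$, and is nuclear because the displayed estimate gives valuation $\ge i/d$ along the $i$-th row. Using the identity $m_G\circ\psi_p=\psi_p\circ m_{G(x^p)}$ repeatedly, I would check that the $h$-fold twisted composite $\psi_p\circ m_{\varphi^{h-1}(F)}\circ\cdots\circ\psi_p\circ m_F$ equals $\psi_q\circ m_{F_h}$ with $F_h(x)=\prod_{i=0}^{h-1}\varphi^i(F)(x^{p^i})$, and has matrix $A_h=A_1\varphi(A_1)\cdots\varphi^{h-1}(A_1)$; the analogous $hm$-fold composite equals $\psi_{q^m}\circ m_{F_{hm}}$ and has matrix $A_h^m$. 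Then the orthogonality relation $\sum_{x\in\mu_{q^m-1}}x^j=q^m-1$ or $0$ according as $(q^m-1)\mid j$ or not collapses the sum of the previous paragraph to $(q^m-1)$ times the sum of the diagonal coefficients of $\psi_{q^m}\circ m_{F_{hm}}$, i.e. $S^*_m(f)=(q^m-1)\Tr(A_h^m)$. Substituting into $L^*(f,t)=\exp(\sum_m S^*_m(f)t^m/m)$ and using $\sum_m\Tr(M^m)t^m/m=-\log\det(I-tM)$ for the nuclear $M=A_h$ and $M=qA_h$ yields $L^*(f,t)=\det(I-tA_h)/\det(I-tqA_h)$.

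The one genuinely delicate point is reconciling this with the stated formula: $A_h$ has entries in $\Q_q(\g^{1/d})$, whereas $L^*(f,t)$ must be a power series over $\Z_p$. This is handled exactly as in \cite[\S2]{OY}: one regards the level-$p$ operator $\psi_p\circ m_F$ as $\varphi$-semilinear over $\Q_q(\g^{1/d})$, so that $A_h=A_1\varphi(A_1)\cdots\varphi^{h-1}(A_1)$ is the matrix of its ($\Q_q$-linear) $h$-th iterate, and one reads both Fredholm determinants with the $\varphi^{-1}$-twist, i.e. as $\detphi(I-tA_h)$ and $\detphi(I-tqA_h)$, which is what makes them genuinely defined over $\Z_p$ while remaining compatible with the generating-function computation. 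I expect this semilinearity/twisted-determinant bookkeeping to be the only part requiring real care; the rest is the routine specialisation above, together with the (already recorded) nuclearity estimates guaranteeing that all the determinants converge.
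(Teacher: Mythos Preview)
The paper does not prove this statement: Theorem~\ref{Dwork} is stated as a known result of Dwork, and the reader is simply referred to \cite[\S 2]{OY} for the general setup. Your sketch of the standard Dwork trace-formula argument (splitting identity, nuclear Dwork operator on overconvergent series, orthogonality to extract $S^*_m=(q^m-1)\Tr(A_h^m)$, and the $\varphi$-semilinear bookkeeping for the twisted determinants) is a correct outline of that argument and indeed follows the treatment in \cite{OY}; there is nothing to compare against in the present paper itself.
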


\subsection{Zhu's rigid transformation theorem}
Let $U_1=(u_{ij})_{i,j\ge0}$ be a nuclear matrix over $\Q_q(\g^{1/d})$. Then the Fredholm determinant $\det(I-tU_1)$ is well defined and $p$-adic entire (see \cite{S}). Write
  \[ \det(I-tU_1)=c_0+c_1t+c_2t^2+\cdots.\]
For $0\le t_1<t_2<\cdots<t_s$, denote by $U_1(t_1,\ldots,t_s)$ the principal sub-matrix consisting of $(t_i,t_j)$-entries of $U_1$ for $1\le i,j\le s$. In particular, denote $U_1[s]=U_1(0,1,\ldots,s-1)$. Then we have $c_0=1$ and for $s\ge 1$,
  \[ c_s=(-1)^s\sum_{0\le t_1<t_2<\cdots<t_s} \det U_1(t_1,t_2,\ldots,t_s).\]
Let $U_h=U_1\varphi(U_1)\cdots\varphi^{h-1}(U_1)$. Write
  \[ \det(I-tU_h)=C_0+C_1t+C_2t^2+\cdots.\]

\begin{thm}(See \cite[Theorem~5.3]{Z1}.)\label{zhu}
Suppose $(\beta_s)_{s\ge 0}$ is a strictly increasing sequence such that
  \[ \beta_i\le v(a_{ij})\ \text{and}\ \lim_{s\ra +\infty} \beta_s=+\infty. \]
If
  \[ \sum_{s<i} \beta_s\le v(\det U_1[i])\le \frac{\beta_i-\beta_{i-1}}{2}+\sum_{s<i}\beta_s \]
holds for every $1\le i\le k$, then $v(C_i)=hv(\det U_1[i])$ for $1\le i\le k$ and
  \[\NP_q(\det(I-tA_h[k]))=\NP_p(\det(I-tA_1[k])).\]
\end{thm}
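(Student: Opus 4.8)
The plan is to establish the two numerical assertions $v(c_i)=v(\det U_1[i])$ and $v(C_i)=h\,v(\det U_1[i])$ for $1\le i\le k$, after which the Newton-polygon identity falls out formally. Abbreviate $\mathrm{HP}(i)=\sum_{s<i}\beta_s$, so the hypothesis reads $\mathrm{HP}(i)\le v(\det U_1[i])\le\mathrm{HP}(i)+\tfrac12(\beta_i-\beta_{i-1})$. For the first assertion I would argue by diagonal dominance in the expansion $c_i=(-1)^i\sum_{t_1<\cdots<t_i}\det U_1(t_1,\dots,t_i)$: expanding each minor by the Leibniz rule and using the entrywise bound $\beta_t\le v(u_{t,j})$ shows every summand has valuation $\ge\sum_j\beta_{t_j}$; since $(\beta_s)$ is strictly increasing, any index tuple different from $(0,1,\dots,i-1)$ already gives $\sum_j\beta_{t_j}\ge\mathrm{HP}(i)+(\beta_i-\beta_{i-1})$, whereas the upper bound in the hypothesis forces $v(\det U_1[i])\le\mathrm{HP}(i)+\tfrac12(\beta_i-\beta_{i-1})$, strictly below that. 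Hence $\det U_1[i]$ is the unique minimal-valuation term, so $v(c_i)=v(\det U_1[i])$; the same argument applied to the truncation $U_1[k]$ (whose $t^i$-coefficient still contains $\det U_1[i]$ as dominant term) identifies $\NP_p(\det(I-tU_1[k]))$ with the lower convex hull of the points $(i,v(\det U_1[i]))$, $0\le i\le k$.

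For the second assertion I would exploit that $U_h=U_1\varphi(U_1)\cdots\varphi^{h-1}(U_1)$. Iterated Cauchy--Binet expresses each principal minor as
\[ \det\bigl((U_h)_{S,S}\bigr)=\sum_{T_1,\dots,T_{h-1}}\ \prod_{l=0}^{h-1}\varphi^l\bigl(\det (U_1)_{T_l,T_{l+1}}\bigr),\qquad T_0=T_h=S,\]
the sum running over tuples of $i$-element index sets, where $i=|S|$, and each $\varphi^l$ preserves $v$. The ``diagonal'' chain, $S$ and all $T_l$ equal to $\{0,\dots,i-1\}$, contributes $\prod_l\varphi^l(\det U_1[i])$, of valuation exactly $h\,v(\det U_1[i])$. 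For any other chain the factors with $T_l=T_{l+1}=\{0,\dots,i-1\}$ should be kept at their \emph{exact} valuation $v(\det U_1[i])$, while each of the at least two remaining factors is bounded below, using the entrywise valuation estimate together with the combinatorial minimum above and the rewritten hypothesis $\mathrm{HP}(i)\ge v(\det U_1[i])-\tfrac12(\beta_i-\beta_{i-1})$, by $v(\det U_1[i])+\tfrac12(\beta_i-\beta_{i-1})$. Summing, a non-diagonal chain has valuation $\ge h\,v(\det U_1[i])+(\beta_i-\beta_{i-1})>h\,v(\det U_1[i])$; since every chain with $S\ne\{0,\dots,i-1\}$ is non-diagonal, the diagonal term alone governs $C_i$ and $v(C_i)=h\,v(\det U_1[i])$ for $1\le i\le k$. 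Because $\NP_q$ rescales valuations by $1/h$, the points $(i,v(C_i)/h)=(i,v(\det U_1[i]))$ match those from the first step (and likewise after truncating $U_h$ to $U_h[k]$), giving $\NP_q(\det(I-tU_h[k]))=\NP_p(\det(I-tU_1[k]))$.

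The crux is the second step. The key structural observation is that one must not weaken the ``good'' factors of a chain down to the Hodge bound $\mathrm{HP}(i)$: doing so would cost up to $\tfrac12(\beta_i-\beta_{i-1})$ per good factor and would overwhelm the $\tfrac12$-slack once $h\ge2$, whereas keeping them exact confines that slack to the (at least two) bad factors, where it is always sufficient. The remaining technical point is that a bad factor incident to the outer set $S$ has the shape $\det (U_1)_{\{0,\dots,i-1\},S}$, whose minor is controlled not by the row bound but by the column/nuclearity data of $U_1$ — equivalently, one must treat with care the boundary case in which the hypothesis inequality is an equality. Making these two estimates fit together is precisely what the rigid transformation theorem packages, so I would either cite it directly or reproduce these estimates in the present sparse setting.
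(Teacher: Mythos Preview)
The paper does not prove this theorem: it is quoted from \cite[Theorem~5.3]{Z1} (Zhu's rigid transformation theorem) and used as a black box, so there is no in-paper argument to compare your sketch against.

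On the substance of your outline: the first step ($v(c_i)=v(\det U_1[i])$) is correct, and the iterated Cauchy--Binet set-up for the second is the right framework, but your bound on the ``bad'' factors is not valid as stated. You assert that each bad factor has valuation at least $v(\det U_1[i])+\tfrac12(\beta_i-\beta_{i-1})$; this is true when the \emph{row} index set $T_l\ne\{0,\dots,i-1\}$, but fails for a ``transition-out'' factor $\det(U_1)_{\{0,\dots,i-1\},T_{l+1}}$ with $T_{l+1}\ne\{0,\dots,i-1\}$, where the only hypothesis $v(u_{ij})\ge\beta_i$ gives merely $\ge\mathrm{HP}(i)\le v(\det U_1[i])$. (There is no column bound in the stated hypotheses, so your appeal to ``column/nuclearity data'' has no content here.) In the minimal non-diagonal chain with $S=\{0,\dots,i-1\}$ and a single intermediate $T_l\ne\{0,\dots,i-1\}$, the two bad factors together contribute only $\ge 2\,\mathrm{HP}(i)+(\beta_i-\beta_{i-1})$; combined with the $h-2$ good factors at exact valuation $v(\det U_1[i])$, this yields $\ge h\,v(\det U_1[i])$ by the upper-bound hypothesis---but not strictly greater when that hypothesis is an equality. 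Thus with the non-strict inequalities as written, your pairing argument proves $v(C_i)\ge h\,v(\det U_1[i])$ but not equality; one needs either the strict form of the upper bound (as in some formulations of Zhu's result) or an additional argument in the boundary case. Citing \cite{Z1}, as the paper does, sidesteps this.
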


\section{Slopes of the Newton polygon of $L^*(f,\chi,t)$}
From now on, we assume $p\equiv-1\bmod d$ and write $p=dk-1$.
\subsection{The case $\chi=\Psi_1$}
\begin{lemma}
Let $M(s)=(a_{ij})_{1\le i,j\le s}$ be an $s\times s$ matrix with entries
  \[ a_{i,j}=\frac{a^{i+j}}{(ki-i-j)!(i+j)!}.\]
Then $v(\det M(s))=0$ for $1\le s\le d-1$.
\end{lemma}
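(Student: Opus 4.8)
The plan is to strip off all the unit factors, reduce the claim to the non-vanishing modulo $p$ of an explicit integer determinant of binomial coefficients, and then evaluate that determinant by converting it into a ``Vandermonde-style'' polynomial-evaluation matrix.

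First the reduction. Since $a\in\F_q^\times$ its Teichm\"uller lift is a unit, so pulling $a^i$ out of row $i$ and $a^j$ out of column $j$ leaves the matrix with entries $\frac1{(ki-i-j)!\,(i+j)!}$. Because $1\le i,j\le s\le d-1$, $k=(p+1)/d$, and $p>N(d)$, one checks that each of $ki-i-j$, $i+j$, $ki$ lies in $\{0,1,\dots,p-1\}$, so every factorial that occurs is a $p$-adic unit; in particular $k\ge2$. Using $\frac1{(ki-i-j)!(i+j)!}=\frac1{(ki)!}\binom{ki}{i+j}$ (valid in all cases with the convention $1/n!=0$ for $n<0$) and extracting the unit $(ki)!$ from row $i$, one reduces to showing that $D_s:=\det\bigl(\binom{ki}{i+j}\bigr)_{1\le i,j\le s}\in\Z$ is prime to $p$.

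Next I would put $D_s$ into a usable shape. Multiplying row $i$ by the unit $(i+s)!$ and using $\binom{ki}{i+j}=\frac{(ki)^{\underline i}\,((k-1)i)^{\underline j}}{(i+j)!}$ (splitting the falling factorial $(ki)^{\underline{i+j}}$ at the $i$-th step, where $x^{\underline m}=x(x-1)\cdots(x-m+1)$) turns the $(i,j)$-entry into $(i+s)^{\underline{s-j}}(ki)^{\underline i}\,((k-1)i)^{\underline j}$; extracting the further unit $(ki)^{\underline i}=ki(ki-1)\cdots((k-1)i+1)$ from row $i$ leaves the matrix $\bigl(P_j(i)\bigr)_{1\le i,j\le s}$ with $P_j(x):=(x+s)^{\underline{s-j}}\bigl((k-1)x\bigr)^{\underline j}\in\Z[x]$, a polynomial of degree $s$ with leading coefficient $(k-1)^j$. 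It remains to prove this matrix is nonsingular over $\F_p$. For that: if $\sum_j a_jP_j$ vanishes at $x=1,\dots,s$ then, having degree $\le s$ and $s$ roots, it equals $C\prod_{m=1}^s(x-m)$ with $C=\sum_j a_j(k-1)^j$; evaluating at $x=0$ annihilates every $P_j(0)$ (each $P_j$ contains the factor $(k-1)x$) while the right-hand side is $C(-1)^s s!$, forcing $C=0$ and hence $\sum_j a_jP_j\equiv0$ identically. Evaluating next at $x=-1,-2,\dots,-s$ gives $P_j(-r)=(s-r)^{\underline{s-j}}\bigl(-(k-1)r\bigr)^{\underline j}$, which vanishes for $j<r$ (the first factor then runs through $0$) and equals the unit $(s-r)!\cdot\bigl(-(k-1)r\bigr)^{\underline r}$ for $j=r$: indeed $(s-r)!$ and each factor $(k-1)r+l$ ($0\le l<r$) are positive integers bounded by $(k-1)(d-1)+(d-2)=p-k<p$, hence units. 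So $\bigl(P_j(-r)\bigr)_{r,j}$ is triangular with unit diagonal, which forces all $a_j=0$. Therefore $p\nmid D_s$ and $v(\det M(s))=0$.

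The step I expect to be the real obstacle is the middle one: because the $(i+j)!$ in the denominator couples rows and columns, $M(s)$ is not literally a Vandermonde or a Cauchy matrix and $D_s$ has no naive single-product formula (for instance $D_3=588=2^2\cdot3\cdot7^2$ when $d=4,\ p=11$), so the real content is hitting on the right normalization --- multiply row $i$ by $(i+s)!$, then peel off $(ki)^{\underline i}$ --- that uncovers the polynomial structure $P_j$; this is the ``Vandermonde style'' manipulation alluded to in the Remark. After that the only genuinely delicate point is the estimate $(k-1)(d-1)+(d-2)<p$ (together with $d\nmid i$, which rules out the entries being \emph{identically} zero), and this is exactly where the explicit value of $N(d)$ is used.
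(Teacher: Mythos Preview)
Your proof is correct. The initial reduction---stripping off $a^{i+j}$ and rescaling row $i$ so that the $(i,j)$-entry becomes $(i+s)^{\underline{s-j}}$ times a falling factorial of $(k-1)i$---is essentially the paper's as well: the paper multiplies row $i$ by $(ki-i-1)!(i+s)!$, you effectively by $((k-1)i)!(i+s)!$, differing only by the unit row factor $(k-1)i$.

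The arguments then diverge. The paper expands $((k-1)i-1)^{\underline{\,j-1}}$ in the basis $\{(i+j)^{\underline{\,t}}:0\le t\le j-1\}$, uses the telescoping identity $(i+j)^{\underline{\,t}}(i+s)^{\underline{\,s-j}}=(i+s)^{\underline{\,s-j+t}}$, and thereby factors the resulting matrix as $M_{11}M_{12}M_2$ with $M_{11}$ Vandermonde and $M_{12},M_2$ upper triangular; this yields a closed product formula for $\det M(s)$. You instead keep the entry as $P_j(i)$ and prove linear independence of the $P_j$ by auxiliary evaluation at $x=0,-1,\dots,-s$, where the system becomes triangular with unit diagonal. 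Your route is shorter and sidesteps the change-of-basis computation; the paper's route buys an explicit determinant formula rather than mere nonvanishing.

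Two harmless imprecisions worth noting: the quantity $ki-i-j$ can in fact be negative (e.g.\ $d=5$, $p=19$, $i=1$, $j=4$), but your convention $1/n!=0$ for $n<0$ already covers this and the binomial identity you quote still holds with both sides zero. Also, the inequality you actually use at the end, $(k-1)(d-1)+(d-2)=p-k<p$, requires only $k\ge 2$ (equivalently $p\ge 2d-1$), not the full strength of $p>N(d)$; the lemma itself does not need $N(d)$.
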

\begin{proof}
Denote $x[0]=1$ and $x[n]:=x(x-1)\cdots(x-n+1)$ for $n\geq 1$. Then $x[n]$ is a polynomial of $x$ of degree $n$ and $\{(x+j)[t]:0\le t\le j-1\}$ is a basis of the space of polynomials of degree $\le j-1$.  Thus we can write
  \[((k-1)x-1)[j-1]=c_0(j)+\sum_{t=1}^{j-1}c_t(j)\cdot (x+j)[t].\]
Let $x=-j$, we get
  \[c_0(j)=((k-1)(-j)-1)[j-1]=((1-k)j-1)[j-1].\]
For any $1\le u\le j-1$,
  \[ 1\le(k-1)j+u<kj\le k(d-1)\le p. \]
Hence $p\nmid (1-k)j-u$ and $v(c_0(j))=0$.

Let $D=\diag\{a,a^2,\ldots,a^s\}$ and $M'=(a'_{ij})_{1\leq i,j\leq s}$ with $a'_{ij}=a_{ij}a^{-i-j}$, then
  \begin{equation}\label{mateq0} M(s)=D M' D.  \end{equation}

Let $a''_{ij}:=(ki-i-1)!(i+s)!a'_{ij}$. Then
  \[\begin{split}
     a''_{ij}&=(ki-i-1)[j-1]\cdot(i+s)[s-j]\\
            &=\sum_{t=0}^{j-1}c_t(j)\cdot (i+j)[t]\cdot (i+s)[s-j],\\
            &=\sum_{t=0}^{j-1}c_t(j)\cdot (i+s)[s-j+t]\\
            &=\sum_{t=1}^{j} (i+s)[s-t] \cdot c_{j-t}(j).
  \end{split}\]
Define $c_{j-t}(j):=0$ for $j<t$. Write $M''=(a''_{ij})_{1\le i,j\le s}$,  $M_1=((i+s)[s-t])_{1\leq i,t\leq s}$ and $M_2=(c_{j-t}(j))_{1\leq t,j\leq s}$. Then
  \begin{equation}\label{mateq1} M''=M_1 M_2.  \end{equation}

Write
  \[ x[n]=\sum_{t=0}^{n}c'_t(n)x^t, \]
then $c'_n(n)=1$ and
  \[ (i+s)[s-j]=\sum_{t=0}^{s-j}c'_t(s-j)(i+s)^t. \]
Define $c'_t(n):=0$ for $t>n$. Write $M_{11}=((i+s)^{t-1})_{1\leq i,t\leq s}$ and $M_{12}= (c'_{t-1}(s-j))_{1\leq t,j\leq s}$. Then
  \begin{equation}\label{mateq2} M_1=M_{11} M_{12}. \end{equation}
Notice that $M_{11}$ is a Vandermonde matrix with determinant $\det M_{11}=\prod_{t=1}^s t^{s-t}$. One can also easily find
  \begin{equation}\label{mateq3} \det M_{12}=(-1)^{[s/2]} \quad\text{and}\quad \det M_2=\prod_{i=1}^sc_0(i).  \end{equation}
Now by \eqref{mateq0}, \eqref{mateq1}, \eqref{mateq2} and \eqref{mateq3},
 \[ \det M(s)=a^{s(s+1)} (-1)^{[s/2]}\prod_{i=1}^s \frac{i^{s-i}c_0(i)}{(ki-i-1)!(i+s)!}. \]
Hence $v(\det M(s))=0$.
\end{proof}

Denote $O(x)$ a number in $\overline{\Q}_p$ with valuation $\ge v(x)$ for $x\in\overline{\Q}_p$.
\begin{lemma}
$(i)$ For $i+j<d$, $F_{pi-j}=\g^{ki}(a_{ij}+O(\g))$.

$(ii)$ For $i+j\ge d$, $v(F_{pi-j})=ki-1$ and
  \[ F_{pi-(d-i)}=\frac{\g^{ki-1}(1+O(\g))}{(ki-1)!}.\]
\end{lemma}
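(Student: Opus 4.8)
The plan is to read everything off the explicit formula $F_{pi-j}=\sum_{dm+(d-1)n=pi-j}\gamma_m\gamma_n a^n$ by isolating the term of smallest valuation. Writing $p=dk-1$ gives $pi-j=dki-i-j$, and every term satisfies $v(\gamma_m\gamma_n a^n)\ge(m+n)/(p-1)$ since $a$ is a unit. First I would parametrize the admissible pairs $(m,n)$ — those with $dm+(d-1)n=pi-j$ and $m,n\ge0$ — by $\ell:=m+n$: solving $dm+(d-1)n=pi-j$ together with $m+n=\ell$ yields
\[ m=(pi-j)-(d-1)\ell,\qquad n=d\ell-(pi-j). \]
Thus $n\ge0$ forces $\ell\ge\lceil(pi-j)/d\rceil=:\ell_0$, so $v(F_{pi-j})\ge\ell_0/(p-1)$; and there is at most one admissible pair for each value of $\ell$, so no cancellation can occur.

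Next I would evaluate $\ell_0=\lceil ki-(i+j)/d\rceil$, assuming $i\ge1$ ($i=0$ being trivial). If $i+j<d$ then $\ell_0=ki$, with level-$\ell_0$ pair $(m,n)=((k-1)i-j,\,i+j)$. If $d\le i+j<2d$ — which holds whenever $i,j\le d-1$, the range relevant in the applications — then $\ell_0=ki-1$, with level-$\ell_0$ pair $(m,n)=((k-1)i+(d-1-j),\,i+j-d)$, degenerating for $j=d-i$ to $(ki-1,0)$, whose term is $\gamma_{ki-1}$. In the second regime I would check $0\le m,n\le p-1$ directly (using $i,j\le d-1$ and $k\ge2$, which follows from $p>N(d)$); in the first regime the same holds whenever the pair is admissible, i.e.\ whenever $(k-1)i\ge j$, and when $(k-1)i<j$ the level-$\ell_0$ pair is inadmissible but then $a_{ij}=0$ by the convention $1/n!=0$ for $n<0$, so this case is handled automatically.

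Then I would evaluate the level-$\ell_0$ term exactly: since $\gamma_m=\gamma^m/m!$, $\gamma_n=\gamma^n/n!$ and $m!,n!$ are units, it equals
\[ \frac{a^{i+j}\gamma^{ki}}{((k-1)i-j)!\,(i+j)!}=\gamma^{ki}a_{ij} \]
in case $(i)$ (using $(k-1)i-j=ki-i-j$), and $\dfrac{a^{i+j-d}\gamma^{ki-1}}{((k-1)i+(d-1-j))!\,(i+j-d)!}$ in case $(ii)$, which is $\gamma^{ki-1}/(ki-1)!$ when $j=d-i$. Its valuation is exactly $\ell_0/(p-1)$, which gives the asserted valuation of $F_{pi-j}$. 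Finally $F_{pi-j}$ minus this term is the sum over admissible pairs with $\ell\ge\ell_0+1$, of valuation $\ge(\ell_0+1)/(p-1)=v(\gamma\cdot\gamma^{\ell_0})$, hence of the form $\gamma^{\ell_0}O(\gamma)$; so $F_{pi-j}=\gamma^{ki}(a_{ij}+O(\gamma))$ in case $(i)$, and in case $(ii)$ with $j=d-i$, factoring out the unit $(ki-1)!$, $F_{pi-j}=\frac{\gamma^{ki-1}(1+O(\gamma))}{(ki-1)!}$.

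The main obstacle is purely bookkeeping: making the ceiling $\lceil ki-(i+j)/d\rceil$ split exactly into the two stated cases, and equal $ki-1$ uniformly for $d\le i+j<2d$ — this last point is where $i,j\le d-1$ is essential, since for $i+j\ge2d$ the value would drop and the stated valuation would be wrong — together with verifying $0\le m,n\le p-1$ for the level-$\ell_0$ pair, which is what makes its valuation exact rather than just a lower bound. Everything else reduces to routine manipulation of the factorial expressions.
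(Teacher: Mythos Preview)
Your proposal is correct and takes essentially the same approach as the paper: isolate the solution $(m,n)$ of $dm+(d-1)n=pi-j$ with minimal $m+n$ (equivalently, the one with $0\le n<d$), evaluate that term exactly via $\gamma_m=\gamma^m/m!$, and bound the remaining terms by an extra power of $\gamma$. Your parametrization by $\ell=m+n$ coincides with the paper's shift parameter $l$ through $\ell=\ell_0+l$, and you are in fact more careful than the paper about verifying $0\le m,n\le p-1$ and about the degenerate subcase $(k-1)i<j$ where $a_{ij}=0$.
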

\begin{proof}
Let
  \[  m=\begin{cases}
        ki-i-j,&\ \text{if}\ j<d-i;\\
        ki-i-j+d-1,    &\ \text{if}\ j\ge d-i,
  \end{cases}\]
  \[  n=\begin{cases}
        i+j,&\ \text{if}\ j<d-i;\\
        i+j-d,  &\ \text{if}\ j\ge d-i.
  \end{cases}\]
Then $pi-j=dm+(d-1)n$ and $0\le n\le d-1$. This lemma follows from
  \[F_{pi-j}=\sum_{l\ge 0} \g_{m-(d-1)l}\g_{n+dl}a^{n+dl}=\g_m\g_na^n(1+O(\g))=\frac{\g^{m+n}a^n}{m!n!}(1+O(\g)). \qedhere \]
\end{proof}

\begin{prop}
For $1\leq s\leq d-1$, the valuation of $\det A_1[s+1]$ is $w_0+w_1+\cdots+w_s$.
\end{prop}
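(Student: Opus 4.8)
The plan is to reduce $\det A_1[s+1]$ to an $s\times s$ determinant and then run a permutation‑expansion argument whose leading term is controlled by the matrix $M$ of the first lemma. Since $F_i=0$ for $i<0$, the $0$-th row of $A_1$ is $(1,0,0,\dots)$; expanding along it gives $\det A_1[s+1]=\det B$, where $B:=A_1(1,\dots,s)=(F_{pi-j}\g^{(j-i)/d})_{1\le i,j\le s}$. By the lemma on $F_{pi-j}$, together with the fact (established exactly as in the proof of the first lemma, using $p>N(d)$) that the factorials $(ki-i-j)!$, $(i+j)!$, $(ki-1)!$ appearing are units, the entries of $B$ have the exact valuations
\[
v(B_{ij})=\begin{cases}\dfrac{pi+j}{d(p-1)},& i+j<d,\\
\dfrac{pi+j-d}{d(p-1)},& i+j\ge d,\end{cases}
\]
where we use $dk=p+1$. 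Hence, for any permutation $\sigma$ of $\{1,\dots,s\}$,
\[
v\Big(\prod_{i}B_{i\sigma(i)}\Big)=\frac{(p+1)s(s+1)}{2d(p-1)}-\frac{e(\sigma)}{p-1},\qquad e(\sigma):=\#\{i:i+\sigma(i)\ge d\},
\]
since $\sum_i\sigma(i)=\sum_i i$.

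So $v(\det B)$ is governed by $\max_\sigma e(\sigma)$. A column $\sigma(i)$ with $i+\sigma(i)\ge d$ forces $i\ge d-s$, so $e(\sigma)\le\max(0,2s-d+1)$, and $e(\sigma)=0$ for all $\sigma$ when $s<d/2$. For $d/2\le s\le d-1$ the value $2s-d+1$ is attained, and a short forcing argument — $\sigma(d-s)$ must be $s$, then $\sigma(d-s+1)$ must be $s-1$, and so on down to $\sigma(s)=d-s$ — shows that the maximizing permutations are precisely those with $\sigma(i)=d-i$ for $d-s\le i\le s$ and $\sigma$ restricted to $\{1,\dots,d-s-1\}$ an arbitrary permutation of $\{1,\dots,d-s-1\}$. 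A direct computation with $dk=p+1$ confirms that the resulting minimum valuation of $\prod_iB_{i\sigma(i)}$ equals $w_0+w_1+\cdots+w_s$, and that every $\sigma$ with $e(\sigma)$ below the maximum gives strictly larger valuation.

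It remains to see that the leading terms do not all cancel. For $d-s\le i\le s$ the entry $B_{i,d-i}$ has leading term $\g^{(p-1)i/d}/(ki-1)!$ (the explicit formula in the lemma, valid because $j=d-i$); for $i,j\le d-s-1$ the entry $B_{ij}$ has leading term $\g^{(pi+j)/d}a_{ij}$ with $a_{ij}$ as in the first lemma. Because the maximizing permutations send $\{d-s,\dots,s\}$ bijectively to itself via $i\mapsto d-i$ and permute $\{1,\dots,d-s-1\}$ freely, the sum over them of the corresponding leading‑term products of $\det B$ factors as
\[
\pm\ \Big(\prod_{i=d-s}^{s}\frac{\g^{(p-1)i/d}}{(ki-1)!}\Big)\cdot \g^{(p+1)\binom{d-s}{2}/d}\cdot \det M(d-s-1)
\]
(and, when $s<d/2$, simply as $\pm\,\g^{(p+1)\binom{s+1}{2}/d}\det M(s)$). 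By the first lemma $\det M(\cdot)$ has valuation $0$ (with $\det M(0)=1$), and $(ki-1)!$ is a unit, so this expression is nonzero and has valuation $w_0+\cdots+w_s$; every remaining contribution to $\det B$ — the cross terms from the non‑leading parts of the entries and all $\sigma$ with non‑maximal $e(\sigma)$ — has strictly larger valuation. Therefore $v(\det A_1[s+1])=v(\det B)=w_0+\cdots+w_s$. The delicate point is this last step: one must verify that the row‑ and column‑exponents of $\g^{1/d}$ separate so that the leading terms from the maximizing permutations reassemble exactly into $\det M(d-s-1)$ (rather than some other minor), and that all the other terms in the full Leibniz expansion are genuinely of higher valuation.
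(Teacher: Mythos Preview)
Your argument is correct and complete; the hedging in your final paragraph is unnecessary, since the powers of $\g^{1/d}$ in the leading term of $B_{ij}$ depend only on the row and column separately (the exponent $(pi+j)/d$ splits as $pi/d+j/d$), so for any maximizing $\sigma$ the $\g$-factor over the free block $\{1,\dots,d-s-1\}$ is $\g^{(p+1)\binom{d-s}{2}/d}$ independently of the permutation $\tau$, and the sum collapses exactly to $\det M(d-s-1)$; every other term in the Leibniz expansion has valuation at least $v(\g)=\tfrac{1}{p-1}$ higher.

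Your route, however, differs from the paper's. The paper first conjugates by diagonal matrices so that the entries of the resulting matrix $B[s]=(\g^{1-ki}F_{pi-j})_{1\le i,j\le s}$ have valuation $v(\g)$ on the region $i+j<d$ and valuation $0$ on $i+j\ge d$; for $s\le(d-1)/2$ this gives $\det B[s]=\g^{s}(\det M(s)+O(\g))$ directly. For $d/2\le s\le d-1$ the paper does not do a permutation expansion: it block-decomposes $B[s]=\begin{pmatrix}B[d-1-s]&P_1\\P_2&Q\end{pmatrix}$, observes that $Q$ is invertible over the integers (its anti-diagonal entries $1/(ki-1)!$ are units modulo $\g$), and applies the Schur complement identity $\det B[s]=\det Q\cdot\det(B[d-1-s]-P_1Q^{-1}P_2)$ to reduce to the small-$s$ case, since $P_1Q^{-1}P_2$ has entries of valuation $\ge 2v(\g)$. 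Your combinatorial approach is more explicit---it pinpoints exactly which permutations contribute and why the forcing $\sigma(i)=d-i$ on the middle block is unavoidable---whereas the paper's Schur-complement trick is slicker once spotted and makes the reduction to $\det M$ immediate without tracking individual permutations.
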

\begin{proof}
Note that the first row of $A_1$ is $(1,0,0,\ldots)$. Let $A$ be the matrix by deleting the first row and column of $A_1[s+1]$. Then $\det A_1[s+1]=\det A$.

Let $D_1=\diag\{\g^{0/d},\g^{1/d},\ldots,\g^{s/d}\}$, $D_2=\diag\{\g^{k-1},\g^{2k-1},\ldots,\g^{(d-1)k-1}\}$ and $B[s]=(\g^{1-ki}F_{pi-j})_{1\le i,j\le s}$. Then $A=D_1^{-1}D_2B[s]D_1$. It suffices to compute $v(\det B[s])$.

Note that for $s=d-1$,
  \[ B[d-1]=\begin{pmatrix}
    \g a_{11}+O(\g^2)&\cdots&\g a_{1,d-2}+O(\g^2)                   &\frac{1+O(\g)}{(k-1)!}\\
    \vdots              &\udots&\frac{1+O(\g)}{(2k-1)!}  & b_{2,d-1} \\
    \g a_{d-2,1}+O(\g^2)&\udots&\udots&\vdots\\
    \frac{1+O(\g)}{((d-1)k-1)!}&b_{d-1,2}&\cdots&b_{d-1,d-1}
  \end{pmatrix} \]
with $v(b_{ij})=0$. If $1\le s\le \frac{d-1}{2}$, then
  \[B[s]=\begin{pmatrix}
      \g a_{11}+O(\g^2)&\cdots&\g a_{s1}+O(\g^2)\\
      \vdots&\ddots&\vdots\\
      \g a_{s1}+O(\g^2)&\cdots&\g a_{ss}+O(\g^2)\\
  \end{pmatrix}\]
has determinant
  \[ \det B[s]=\g^s(\det M(s)+O(\g)). \]
The valuation of $\det B[s]$ is $sv(\g)$.

If $\frac{d}{2}\le s\le d-1$, then
  \[B[s]=\begin{pmatrix}
      B[d-1-s] &P_1\\
      P_2&Q
  \end{pmatrix}.\]
The valuation of any entry of $B[d-1-s],P_1,P_2$ is $v(\g)$ and
  \[Q\equiv \begin{pmatrix}
    \multirow{2}{*}{{\Huge0}}&      &\frac{1}{(k-1)!}\\
                           &\udots&\\
    \frac{1}{((d-1)k-1)!}  &      &\multirow{2}*{{\Huge*}}
  \end{pmatrix}\bmod\g.\]
Thus $Q$ is invertible over the ring of integers of $\Q_p(\g)$. The determinant
  \[\det B[s]=\det Q\det(B[d-1-s]-P_1Q^{-1}P_2)=\det Q \det B[d-1-s](1+O(\g))\]
has valuation $(d-1-s)v(\g)$.

Finally, $A=D_1^{-1}D_2B[s]D_1$ has valuation
  \[(\sum_{i=1}^{s}(ki-1) +\min\{s,d-1-s\})v(\g)=w_0+w_1+\cdots+w_s. \qedhere \]
\end{proof}

\begin{proof}[Proof of Theorem~\ref{slope}]
For $1\leq s\leq d-1$, we have
  \[\begin{split}
    v(\det A_1[s+1])&=\sum_{i\le s} w_i\\
    &=\begin{cases}
      \frac{s(s+1)}{2d}+\frac{s(s+1)}{d(p-1)},&\ \text{if}\ s\le (d-1)/2;\\
      \frac{s(s+1)}{2d}+\frac{(d-s)(d-s-1)}{d(p-1)},&\ \text{if}\ s\ge d/2;
    \end{cases}\\
    &\le \frac{s(s+1)}{2d}+\frac{d^2-1}{4d(p-1)}.
    \end{split}\]
If $p> \frac{d^2+3}{4}$, then $\frac{d^2-1}{4d(p-1)}<1/d$. For $0\le t_0<t_1<\cdots<t_s$, assume $t_s\neq s$. Since
  \[v(F_{pi-j}\g^{(j-i)/d})\ge i/d,\]
we have
  \[ v(\det A_1[t_0,\ldots,t_s])\ge \frac{s^2+s+2}{2d}>v(\det A_1[s+1]). \]
Thus $v(c_{s+1})=v(\det A_1[s+1])=\sum_{i\le s}w_s$ and $\{w_0,w_1,\ldots,w_{d-1}\}$ are slopes of $\NP_p(\det(I-tA_1))$.

If moreover $p>\frac{d^2}{2}$, then $p\ge \frac{d^2+1}{2}$ and $\frac{d^2-1}{4d(p-1)}\le\frac{1}{2d}$. Choose $\beta_i=i/d$ in Theorem~\ref{zhu}, we have
 \[v(C_{s+1})=h(w_0+w_1+\cdots+w_s)\]
and
  \[\NP_q(\det(I-tA_h[d]))=\NP_p(\det(I-tA_1[d])).\]
Thus $w_0,w_1,\dots,w_{d-1}$ are $q$-adic slopes of $\NP_q(\detphi(I-tA_h))$.

By Theorem~\ref{Dwork},
  \[ \detphi(I-tA_h)=L^*(f,t)\detphi(I-tqA_h). \]
Since the valuation of any entry of $A_h$ is $\ge0$, the $q$-adic slopes of $\detphi(I-tA_h)$ are $\ge0$ and the $q$-adic slopes of $\detphi(I-tqA_h)$ are $\ge1$. Thus any $q$-adic slope of $\detphi(I-tA_h)$ less than $1$ must be a $q$-adic slope of $L^*(f,t)$. But $L^*(f,t)$ has degree $d$, hence $w_0,\ldots,w_{d-1}$ are all slopes of $L^*(f,t)$.
\end{proof}

\subsection{The case for general $\chi$}
Let $f(x)\in\F_q[x]$ be a polynomial with degree $d$. Assume $p\nmid d$. Let $\NP(f,x)$ be the piecewise linear function whose graph is the $q$-adic Newton polygon of $\det(I-tA_h)$. Let $\HP(f,x)$ be the piecewise linear function whose graph is the polygon with vertices
  \[ (k,\frac{k(k-1)}{2d}),\quad k=0,1,2,\ldots. \]
Then $\NP(f,x)\ge \HP(f,x)$ (cf. \cite{LW,OY}). Set
  \[ \gap(f)=\max_{x\ge 0}\{\NP(f,x)-\HP(f,x)\}. \]

\begin{thm}(See \cite[Theorem~4.3]{OY}.)
Let $0=\alpha_0<\alpha_1<\cdots<\alpha_{d-1}<1$ denote the slopes of the $q$-adic Newton polygon of $L^*(f,t)$. If $\gap(f)<1/h$, then the $q$-adic Newton polygon of $L^*(f,\chi,t)$ has slopes
  \[\{p^{1-m_\chi}(i+\alpha_j):0\le i\le p^{m_\chi-1}-1,0\le j\le d-1\}\]
for any non-trivial finite character $\chi$.
\end{thm}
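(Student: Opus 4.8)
The plan is to run Dwork's theory for the twisted $L$-function, reduce the statement to the Newton polygon of a single Fredholm determinant, and then pin that polygon down by a rigidity argument in the spirit of Theorem~\ref{zhu}, with $\gap(f)<1/h$ as the quantitative input; morally this belongs to the same circle of ideas as the work of \cite{DWX,N}. Write $m=m_\chi$. First I would record the twisted trace formula: to $\chi$ one attaches a splitting function built from Artin--Hasse data adapted to a primitive $p^m$-th root of unity, with small parameter $\pi$ of valuation $\frac1{p^{m-1}(p-1)}$, producing a nuclear matrix $\mathcal A_1=\mathcal A_1(\chi)$ over $\Q_q(\g^{1/d},\pi)$ and, with $\mathcal A_h=\mathcal A_1\varphi(\mathcal A_1)\cdots\varphi^{h-1}(\mathcal A_1)$, an identity
\[ L^*(f,\chi,t)\cdot\detphi(I-tq\mathcal A_h)=\detphi(I-t\mathcal A_h) \]
exactly as in \cite{AS,LWe} (the case $m=1$ being Theorem~\ref{Dwork}; see also \cite[\S 2]{OY}). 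The crucial structural point is that, after a diagonal change of basis and modulo terms of strictly higher $\pi$-order, $\mathcal A_1(\chi)$ is a ``block refinement'' of the order-$p$ matrix: its blocks carry additional powers of $\pi$ according to their index, and the leading term of each diagonal block is governed by the $m=1$ operator. Consequently the candidate multiset
\[ \{p^{1-m}(i+\alpha_j):0\le i\le p^{m-1}-1,\ 0\le j\le d-1\} \]
is precisely the polygon obtained from the order-$p$ Newton polygon of $L^*(f,t)$ by rescaling slopes by $p^{1-m}$ and stacking $p^{m-1}$ shifted copies; it sits weakly above the Hodge-type polygon with slopes $p^{1-m}(i+j/d)$, the difference being controlled by the perturbations $\alpha_j-j/d$.

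To run the comparison, let $\lambda_0<\lambda_1<\cdots$ be the increasing enumeration of the candidate slopes and let $\beta^{(m)}_\bullet$ be the strictly increasing sequence of row-wise valuation lower bounds for $\mathcal A_1(\chi)$ (the twisted analogue of $\beta_i=i/d$, whose partial sums trace out the Hodge-type polygon). The goal is to prove, for every $n$ with $1\le n\le p^{m-1}d$,
\[ \sum_{s<n}\beta^{(m)}_s\le v(\det\mathcal A_1(\chi)[n])=\sum_{s<n}\lambda_s\le \frac{\beta^{(m)}_n-\beta^{(m)}_{n-1}}{2}+\sum_{s<n}\beta^{(m)}_s, \]
and moreover that no non-principal minor $\det\mathcal A_1(\chi)(t_0,\ldots,t_{n-1})$ with $(t_0,\ldots,t_{n-1})\neq(0,\ldots,n-1)$ attains a valuation as small as $\sum_{s<n}\lambda_s$. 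Granting this, the $\Q_q(\g^{1/d},\pi)$-analogue of Theorem~\ref{zhu} (valid by the same argument, cf.\ \cite{Z1}) gives $v$ of the $n$-th coefficient of $\detphi(I-t\mathcal A_h)$ equal to $h\sum_{s<n}\lambda_s$ for $n\le p^{m-1}d$; since the $\lambda_s$ are increasing these valuations are convex, so the $q$-adic Newton polygon of $\detphi(I-t\mathcal A_h)$ coincides with the candidate polygon through degree $p^{m-1}d$.

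Establishing the minor identity and the non-competition of the other minors is where the hypothesis enters, and I expect it to be the main obstacle. The identity $v(\det\mathcal A_1(\chi)[n])=\sum_{s<n}\lambda_s$ reduces, through the block structure, to the order-$p$ principal-minor valuations, which are the partial sums of the $\alpha_j$ by the definition of the latter; and the non-competition of a minor that shifts a row into a higher block reduces to the same together with a Vandermonde-type estimate for the leading blocks. The delicate part is the $h$-fold Frobenius product: each factor $\varphi^l(\mathcal A_1(\chi))$ is only a block refinement of the order-$p$ matrix up to an error governed by the excess of $\NP(f,x)$ over $\HP(f,x)$, and these errors accumulate across the $h$ factors, producing a total perturbation of size $\sim h\cdot\gap(f)$. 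The hypothesis $\gap(f)<1/h$ --- meaningful because $\NP(f,x)\ge\HP(f,x)$ always, so $\gap(f)$ is finite --- is exactly what keeps this perturbation inside the rigidity slack $\frac12(\beta^{(m)}_n-\beta^{(m)}_{n-1})$, forcing the $q$-adic Newton polygon of $\detphi(I-t\mathcal A_h)$ to equal the candidate polygon through degree $p^{m-1}d$. Carrying out this accumulation estimate uniformly in $m$ is the heart of the argument.

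Finally I would strip the Euler factors. Every candidate slope $p^{1-m}(i+\alpha_j)$ with $0\le i\le p^{m-1}-1$ and $\alpha_j<1$ satisfies $p^{1-m}(i+\alpha_j)<1$, while every entry of $\mathcal A_h$ has nonnegative valuation, so all slopes of $\detphi(I-tq\mathcal A_h)$ are $\ge 1$. Hence the $p^{m-1}d$ slopes of $\detphi(I-t\mathcal A_h)$ lying below $1$, which by the previous step are exactly the candidate ones, must all be slopes of $L^*(f,\chi,t)$; since $\deg_t L^*(f,\chi,t)=p^{m-1}d$ by \cite{AS,LWe}, these account for every slope, which is the assertion.
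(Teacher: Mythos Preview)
The paper does not prove this theorem; it is quoted as \cite[Theorem~4.3]{OY} and invoked as a black box in the proof of Theorem~\ref{chislope} that immediately follows. There is therefore no proof in this paper to compare your proposal against.

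Regarding the proposal on its own terms: it is a plan, not a proof. The architecture you sketch --- a twisted Dwork trace formula giving $L^*(f,\chi,t)\cdot\detphi(I-tq\mathcal A_h)=\detphi(I-t\mathcal A_h)$, a block-refinement description of $\mathcal A_1(\chi)$ relative to the order-$p$ operator, a Zhu-type rigidity criterion, and the final Euler-factor stripping via the degree count --- is sound and is indeed in the circle of ideas of \cite{OY,DWX,LW}. Your heuristic that each of the $h$ Frobenius factors contributes a perturbation of size at most $\gap(f)$, so that $h\cdot\gap(f)<1$ keeps the accumulated error inside the rigidity slack, correctly identifies the role of the hypothesis. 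However, you yourself flag the central step (``Carrying out this accumulation estimate uniformly in $m$ is the heart of the argument'') without performing it, and the key claims --- the identity $v(\det\mathcal A_1(\chi)[n])=\sum_{s<n}\lambda_s$ and the non-competition of off-principal minors --- are asserted, not verified. Until those are established, nothing is proved. For the record, the argument in \cite{OY} is packaged through the $T$-adic formalism of Liu--Wan \cite{LW}: one works with the universal $T$-adic $L$-function once and for all and then specializes $T$ to the uniformizer attached to $\chi$, which handles all $m$ simultaneously and is where the bound $\gap(f)<1/h$ enters as a single comparison between the $T$-adic Newton polygon and the Hodge polygon, rather than as a separate estimate for each character order.
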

\begin{proof}[Proof of Theorem~\ref{chislope}]
The slopes of $\NP(f,x)$ are
  \[\{i+w_j:i\ge0,0\le j\le d-1\}.\]
Notice that
  \[\sum_{i=0}^{d-1} w_i=\sum_{i=0}^{d-1}\frac{i}{d},\]
$\NP(f,x)-\HP(f,x)$ is a periodic function with period $d$. For $0\le k<d$,
  \[\NP(f,x)-\HP(f,x)\le\sum_{i\le(d-1)/2}\frac{2i}{d(p-1)}\le\frac{d^2-1}{4d(p-1)}.\]
If $p>\frac{h(d^2-1)}{4d}+1$, then $\gap(f)<1/h$ and this concludes the proof.
\end{proof}

\textbf{Acknowledgements.} This paper was prepared when the authors were visiting the Academy of Mathematics and Systems Science and the Morningside Center of Mathematics of Chinese Academy of Sciences. We would like to thank Professor Ye Tian for his hospitality. We also would like to thank Jinbang Yang for many helpful discussions. This work was partially supported by NKBRPC(2013CB834202) and NSFC(11171317 and 11571328).

\end{document}